\documentclass[11pt, a4paper]{article}
\usepackage[english]{babel}
\usepackage{amssymb,amsmath,amsthm,bm,graphicx,natbib}
\usepackage{authblk}

\textwidth=15cm
\hoffset=-0.7cm

\newtheorem{theorem}{Theorem}
\newtheorem{lemma}{Lemma}

\theoremstyle{definition}

\newcommand{\PP}{\mathbb{P}}

\newcommand{\EE}{{\mathbb{E}}}

\newcommand{\lle}{\,\,{\lesssim}\,\,}

\newcommand{\Var}{\mathbb{V}\mathrm{ar}}
\newcommand{\eps}{\varepsilon}

\newcommand{\FF}{\mathcal{F}}

\newcommand{\RR}{\mathbb{R}}
\newcommand{\NN}{\mathbb{N}}

\renewcommand{\phi}{\varphi}

\newcommand{\given}{\mid}
\renewcommand{\l}{\lambda}
\renewcommand{\L}{\Lambda}
\newcommand{\e}{\varepsilon}

\begin{document}

\title{\Large\bfseries Rate-optimal  Bayesian intensity smoothing for inhomogeneous Poisson processes}
\author[1]{\bfseries Eduard Belitser}
\author[2]{\bfseries Paulo Serra}
\author[3]{\bfseries Harry van Zanten}
\affil[1]{Department of Mathematics, VU University Amsterdam}
\affil[2]{Institute for Mathematical Stochastics, University of G\"ottingen}
\affil[3]{Korteweg-de Vries Institute for Mathematics, University of Amsterdam}
\date{\today}

\maketitle

\begin{abstract}
We apply nonparametric Bayesian methods to study the problem of estimating the intensity function of an inhomogeneous Poisson process.
To motivate our results we start by analysing count data coming from a call centre which we model as a Poisson process.
This analysis is carried out using a certain spline prior.
This prior is based on B-spline expansions with free knots, adapted from well-established methods used in regression, for instance.
This particular prior is computationally feasible.
Theoretically, we derive a new general theorem on contraction rates for posteriors in the setting of intensity function estimation which can be applied not just to this spline prior but also to a large number of other commonly used priors.
Practical choices that have to be made in the construction of our concrete spline prior, such as choosing the priors on the number and the locations of the spline knots, are based on these theoretical findings.
The results assert that when properly constructed, our approach yields a rate-optimal procedure that automatically adapts to the regularity of the unknown intensity function.
\end{abstract}

{\bf Keywords:}
Adaptive estimation;
Bayesian nonparametric estimation;
Contraction rate;
Markov chain Monte Carlo;
Poisson process;
Splines.

\bigskip

\section{Introduction}

Poisson processes have a long-standing history and are some of the most widely used processes in statistics to study temporal and 
spacial count data, in diverse fields such as communication, meteorology, seismology, hydrology, astronomy, biology, medicine, actuary sciences and queueing, among others.
In this paper we focus on inhomogenous Poisson processes on the real line with periodic intensity functions, which are models for count data in settings with a natural periodicity.
We obtain asymptotic results as the number of observed periods goes to infinity but our approach is flexible enough to also deliver asymptotic results for estimating an intensity function on a compact in terms the either the number of observed events or in terms of the scale of the intensity function.

Nonparametric Bayesian methods, which are used more and more in many different statistical settings, have so far 
only been used on a limited scale to analyze such models. From the applied perspective they can be attractive
for making inference about intensity functions, for the same reasons they are appealing in other situations. 
Estimating the intensity essentially requires some form of smoothing of the count data, and a nonparametric 
Bayesian approach can provide a natural way of achieving this. Using hierarchical priors we can automatically achieve
a data-driven selection of the degree of smoothing. 
Moreover, Bayesian methods provide a way to quantify the uncertainty about the intensity using the spread of the 
posterior distribution. A typical implementation provides a computational algorithm that can generate a large number 
of (approximate) draws from the posterior. From this it is usually straightforward to construct numerical credible bands or credible sets. 

The relatively small number of papers  using nonparametric Bayesian methodology for intensity function smoothing
have explored various possible prior distributions on intensities.
An early reference is  \cite{Moller}, who consider log-Gaussian priors. 
Other papers employing Gaussian process priors, combined with  suitable link functions, include
\cite{Adams} and  \cite{Palacios}. \cite{Kottas} consider kernel mixtures priors; see also the related paper \cite{DiMatteo}, 
in which count data is analysed using spline-based priors. 

The cited papers show that  nonparametric Bayesian inference for inhomogenous Poisson processes 
can give satisfactory results in various applications. On the theoretical side however the
existing literature provides no 
performance guarantees in the form of consistency theorems or related results. 
It is by now well known that nonparametric Bayes methods may suffer from inconsistency, even 
when seemingly reasonable priors are used (e.g.\ \citealt{Diaconis:1986}). The purpose of this paper is therefore to propose 
a Bayesian approach to nonparametric intensity smoothing that is both 
computationally feasible and at the same time theoretically underpinned by results on consistency and related 
issues like convergence rates and adaptation to smoothness. Such theoretical results have 
in the last decade been obtained for various statistical settings, including density estimation, 
regression, classification, drift estimation for diffusions, etcetera (see e.g.\ \citealt{Sub} for an overview
of some of these results). 
Until now,  intensity estimation for inhomogenous Poisson processes  has remained largely unexplored.

As motivation and starting point for the paper we consider the problem of analysing count data from a  call center. 
The same type of data were analyzed by frequentist methods in the paper \cite{Brown:2005}. 
We revisit the problem using a  nonparametric Bayesian method employing a spline-based prior on the unknown intensity function. 
In addition to a single estimator of the intensity, this method provides credible bounds indicating the degree of uncertainty.
In Section \ref{sec:theory} we study theoretical properties of our procedure, namely consistency, posterior contraction rates and adaptation to smoothness. 
The results show that we have set up our procedure in such a way that we obtain consistent, rate-optimal estimation of the intensity and that the method adapts automatically to the unknown smoothness of the intensity curve, up to the level of the order of the splines that are used. 
Section \ref{sec:con} concludes with some remarks and directions for further research.

\section{Analysis of call center data}
\label{sec:count}

\subsection{Data and statistical model}
\label{sec:data}

The approach we propose and study is motivated by the wish to analyse 
 a dataset consisting of counts of telephone calls arriving 
at a certain call center. 
The dataset was obtained from the website of the S.E.E.\ Center (http://ie.technion.ac.il/Labs/Serveng/) of the Faculty of Industrial Engineering and Management, Technion in Haifa, Israel.
It consists of counts for calls arriving at a bank's 24 hour a day call center in the United States of America.
We considered the records for the period from November 1,  2001 until December 31,  2001, 
covering a total of about 2.8 million incoming phone calls. 
These events are recorded in 30 second intervals with an average of about 32 calls per minute.
The raw data are plotted in Figure \ref{fig1}.

\begin{figure}[h]
\includegraphics[width=14cm, height=6.56cm]{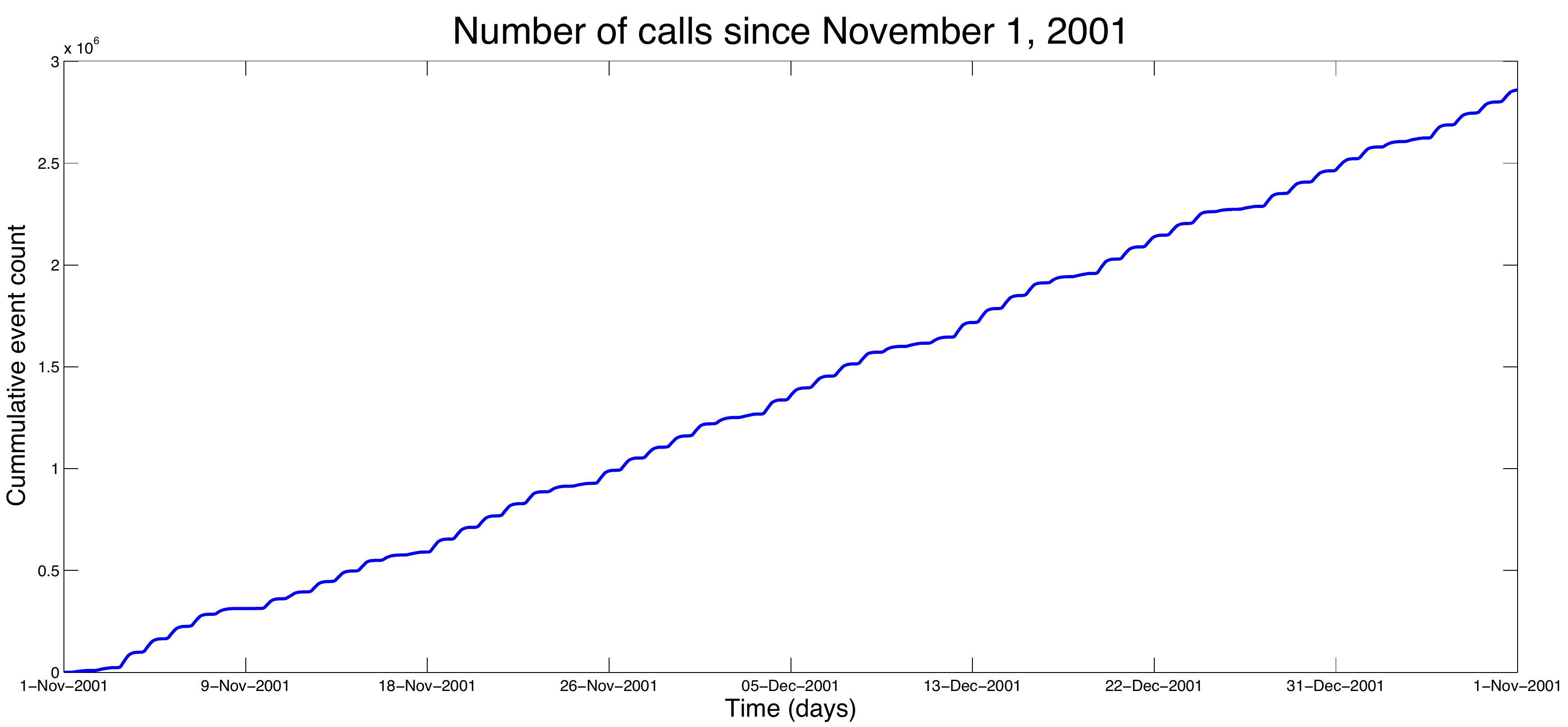}
\caption{Number of incoming phone calls between November 1, 2001 and December 31, 2001.}
\label{fig1}
\end{figure}

We model the full count data as the realization of an inhomogenous Poisson process $N$ with an intensity function $\lambda$ that 
is periodic, the period being $24$ hours (\citealt{Daley:1988}). 
This Poisson assumption is natural and is investigated in some detail in~\cite{Brown:2005}, who could not find significant evidence 
to the contrary in a similar dataset (same kind of data, but over a different time interval).
See also~\cite{Belitser:2012a}, who study the periodicity in the data.

This dataset is known to exhibit periodicity on different time scales; (approximate) daily, weekly, monthly and yearly periodicities seem to be present in the data.
Different time scales are relevant if one would like to take analyze the intensity of the call traffic during, say, the weekends, holidays or specific times of the year.
(To analyze the intensity of calls during weekends, for example, a weekly time scale would be appropriate.)
By carrying out our estimation procedure under the assumption of daily periodicity we are in fact estimating the average daily call intensity between November 1, 2001 and December 31, 2001.
Our study of the data over 24 hour intervals (the smallest time interval over which there is evidence of periodicity; cf. Figure~\ref{fig:random}) is motivated by the fact that the volume of calls in the dataset is already quite high even over individual days.

Let $n$ be the number of days for which we have data ($n = 61$) and let $T$ be the period ($24$ hours). 
Then the full call arrival counting process is given by 
$N = (N_t: t \in [0,nT])$, where $N_t$ is the number of calls arriving in the time interval  $[0,t]$. 
The Poisson assumption means that for every $0 \le s \le t$, the number of arrivals $N_t - N_s$ is independent 
of the history $(N_u: u \le s)$ up till time $s$ and that is has a Poisson distribution with mean 
\[
\int_s^t \lambda(u)\,du. 
\]
We will assume throughout that $\lambda$ is at least a continuous function. The periodicity assumption then means that
 $\lambda$ is a $T$-periodic function, i.e.\ $\lambda(t + T) = \lambda(t)$ 
for all $t \ge 0$. 
For $i = 1, \ldots, n$ we define the counting process $N^{(i)} = (N^{(i)}_t: t \in [0,T])$ by 
\[
N^{(i)}_t = N_{(i-1)T + t} - N_{(i-1)T}, \qquad t \in [0,T], 
\]
i.e.\ $N^{(i)}$ counts the number of arrivals during day $i$. Note that by the independence of the increments of the process $N$, 
the processes $N^{(i)}$ are independent inhomogenous Poisson processes which have the
restriction of $\lambda$ to $[0,T]$ as intensity function. 

Our goal is to make inference about this function.
Note that we do not observe the full process $N$. We only observe it at discrete times, namely every $30$ seconds. 
On average about $16$ calls arrive in a $30$ second time interval, so we really only see aggregated counts. 
Let $\Delta$ be the time between observations ($30$ seconds) and let $m = T/\Delta$ be the number of counts per day 
that we have in our dataset ($m =2880$ in our case). Then for every $i =1, \ldots, n$ and $j = 1, \ldots, m$, the number of arrivals
\begin{equation}\label{eq:c}
C_{ij} = N^{(i)}_{j\Delta} - N^{(i)}_{(j-1)\Delta} 
\end{equation}
in the $j$th time interval on day $i$ has a Poisson distribution with parameter
\begin{equation}\label{eq:l}
\lambda_j = \int_{(j-1)\Delta}^{j\Delta} \lambda(t)\,dt. 
\end{equation}
We denote the total available count data by $C^n = (C_{ij}: i=1, \ldots, n, j=1, \ldots, m)$. 
It follows that the likelihood is given by 
\begin{equation}\label{eq:lik1}
p(C^n \given \lambda) = \prod_{i=1}^n\prod_{j=1}^m \frac{\lambda_j^{C_{ij}}e^{-\lambda_j}}{C_{ij}!}.
\end{equation}

The case of discretized data is more relevant from the practical point of view.
In the following section we describe a prior one may place on the intensity function $\lambda$.

\subsection{Prior on the intensity function}
\label{sec:prior}

There are different possible choices of priors for the function $\lambda$.
A number of options considered earlier in the literature were already mentioned in the introduction (Gaussian processes, 
kernel mixtures, splines). The particular prior we apply in this paper to illustrate our results is motivated 
by the desire to have a computationally manageable procedure on the one hand and theoretical 
performance guarantees on the other. Still there will conceivably be more than one 
sensible choice meeting these requirements. In this section we restrict our efforts to the 
investigation of a specific spline-based prior which is described in detail in this section.
We would like to clarify that we use this particular prior due to its computational simplicity and to illustrate our results from Section~\ref{sec:theory}.
Our theoretical results are more general and in fact cover this spline prior as a particular case.

More precisely we will employ a certain free-knot spline prior which is  similar 
to priors considered earlier in different contexts (see for instance \citealt{SmithKohn96, Denison, DiMatteo}, or, more recently \citealt{Sharef} 
and the references therein). Such priors  have proven to be numerically attractive 
and capable of capturing abrupt changes in functions of interest. This last 
point is relevant for our particular application, since we expect fluctuations 
during the day due to the varying activity of businesses over the day.
Recently, several theoretical results were derived 
for spline-based priors in various setting as well (e.g.\ \citealt{Ghosal:2000,Ghosal:2008, deJonge:2012, Belitser:2012b}). We will show in the next section 
that the procedure that we construct and implement has several desirable theoretical properties.

Background information on splines can be found, for example, 
in \cite{deBoor:2001} or \cite{Schumaker:2007}. 
Let us fix some notations and terminology. 
A function is called a spline of order $q \in \NN$, with respect to a certain partition 
of its support, if it is $q-2$ times continuously differentiable and when restricted 
to each interval in the partition, it coincides with a polynomial of degree at most $q-1$. 
Now consider $q \ge 2$. 
For any $j\in\mathbb{N}$, such that $j\ge q$ let $\mathcal{K}_j = 
\{( k_1, \dots, k_{j-q} )\in (0, T)^{j-q}: 0<k_1< \dots < k_{j-q}<1 \}$.
We will refer to a vector $\bm{k}\in\mathcal{K}_j$ as a sequence of {\em inner knots}.

A vector $\bm{k}\in\mathcal{K}_j$  induces the partition 
$\big\{ [k_0, k_1), [k_1, k_2), \dots, [k_{j-q}, k_{j-q+1}]\big\}$ of $[0,T]$, 
with $k_0 = 0$ and $k_{j-q+1} = T$. 
For  $\bm{k} \in\mathcal{K}_j$, we denote by $\mathcal{S}^{\bm{k}}=\mathcal{S}^{\bm{k}}_q$ 
the linear space of splines  of order $q$ on $[0, T]$ with simple 
knots $\bm{k}$ (see the definition of simple knots  in, e.g.,  \cite{Schumaker:2007}).
This space has dimension $j$ and admits a basis of B-splines 
$\{B_1^{\bm{k}},\ldots, B_j^{\bm{k}}\}$. 
The construction of $\{B_1^{\bm{k}},\ldots, B_j^{\bm{k}}\}$  involves  
the knots $k_{-q+1},\ldots, k_{-1}, k_0, k_1,$ $\ldots,$ $k_{j-q}, k_{j-q+1}, k_{j-q+2},$ $\ldots$ $,k_j$, 
with arbitrary extra knots $k_{-q+1} \le \cdots \le k_{-1} \le k_0 =0$ and 
$T=k_{j-q+1} \le k_{j-q+2} \le \cdots \le k_j$. 
Usually one takes $k_{-q+1}=\cdots =k_{-1}= k_0 =0$ and 
$T=k_{j-q+1}=\cdots =k_j$, and we adopt this choice as well.
For $\bm{k} \in\mathcal{K}_j$ and $\bm{\theta} \in \RR^j$ we denote by $s_{\bm{\theta}, \bm{k}}$ 
the spline in $\mathcal{S}^{\bm{k}}$ that has coefficient vector $\bm{\theta}$
relative to the basis $\{B_1^{\bm{k}},\ldots, B_j^{\bm{k}}\}$, i.e.\
\[
s_{\bm{\theta}, \bm{k}}(t) = \sum_{i = 1}^j \theta_i B_j^{\bm{k}}(t), \qquad t \in [0,T].
\]

To define our  prior $\Pi$ on $\lambda$ we first fix the order $q \ge 2$ of the splines that we use (cubic splines
are popular, they correspond to the choice $q= 4$) and the minimum and maximum intensities $0 \le  M_1 < M_2$. 
Then a draw from the prior $\Pi$ is constructed as follows:
\begin{enumerate}
\item(Number of B-splines): Draw $J \ge q$ according to a shifted Poisson distribution with mean $\mu$.
\item(Location of the knots): Given $J = j$,  construct a regular $1/j^2$-spaced grid in 
$(0,T)$. Then uniformly at random, choose $j-q$ grid elements  (without replacement)
to form a sequence of inner knots $\bm{k}$. 
\item(B-spline coefficients): Also given $J = j$, and independent of the previous step, 
draw a vector $\bm{\theta}$ of $j$ independent, uniform $U[M_1,M_2]$-distributed 
B-spline coefficients. 
\item(Random spline): Finally, construct the random spline $s_{\bm{\theta}, \bm{k}}$
of order $q$ corresponding to the inner knots $\bm{k}$ and with B-spline 
coefficient vector $\bm{\theta}$. 
\end{enumerate}

The specific choices made in the construction of the prior, like the Poisson distribution 
on $J$, choosing the knots uniformly at random from a grid, etcetera, are 
motivated by the optimality theory that we derive in Section \ref{sec:theory}. 
The theory shows that there is some more flexibility, but for 
choices too far from the ones proposed above the performance guarantees brake down. 
Technically, the prior on $\lambda$ is the measure $\Pi$ 
on the space $C[0,T]$ of continuous functions on $[0,T]$ given by the law, or 
distribution of the random spline $s_{\bm{\theta}, \bm{k}}$ described above. 
The splines in $\mathcal{S}^{\bm{k}}_q$ are $q-2$ times continuously differentiable, 
hence in this sense the choice of $q$ determines the regularity of the prior. 
We will see in the next section that it also determines the maximal degree of 
smoothness of the true underlying intensity to which our procedure can adapt. 
In applications like the one we are interested in here, a sensible choice of the parameters 
$M_1$ and $M_2$ will typically be suggested by the average number of counts per time unit in the data. 
The construction of the grid in step 2.\  is non-standard compared to other spline-based priors proposed in the literature.
It is motivated by recent work of \cite{Belitser:2012b} and will allow us to derive desirable theoretical properties in the next section.

\subsection{Posterior inference}
\label{sec:post}

For the data described in Section \ref{sec:data}, with  likelihood \eqref{eq:lik1}, and 
the spline prior $\Pi$ described in Section \ref{sec:prior}, 
we implemented an MCMC procedure to sample from the corresponding 
posterior distribution of the intensity function $\lambda$ of interest.  
The minimal and maximal intensity parameters $M_1$ and $M_2$ were set to 
$200$ and $20000$, respectively. These numbers were motivated by the range of the data (time 
is measured in hours). We took the order $q$ of the splines equal to 4.

Since our prior is very similar to the ones used previously in for instance 
\cite{DiMatteo} or \cite{Sharef} in regression or hazard rate estimation settings, 
our computational algorithm is a rather straightforward adaptation of existing methods. 
A generic state of the chain is a $(2J-q+1)$-dimensional vector $(j, \bm{k}, \bm{\theta})$ where $j\in\mathbb{N}$, $j\ge q$ is the model index, $\bm{k}=\bm{k}_j\in(0, T)^{j-q}$ is a vector of inner knots and $\bm{\theta}=\bm{\theta}_j\in\mathbb{R}^{j}$ is a vector of B-spline coordinates.
Together, these index a spline $s_{\bm{\theta},\bm{k}}=s_{\bm{\theta}_j,\bm{k}_j}^q\in S_{\bm{k}_j}^q$.
We will abbreviate the corresponding posterior distribution by $\pi(j, \bm{k}, \bm{\theta}|C^n)$.
Since the splines involved are easy to evaluate and integrate we can compute the likelihood, and then the posterior, up to the normalization constant, without any approximations being needed.

We consider four different types of moves for the MCMC chain, namely: a) perturbing the coefficients, b) moving the location of one knot, c) birth of a new knot and d) death of an existing knot.
Each of these moves is proposed, independently and respectively, with probabilities $p_a$, $p_b$, $p_c(j)$ and $p_d(j)$ where for each $j\ge q$, $p_a + p_b + p_c(j) + p_d(j) = 1$.
In fact, we start by picking $0 < p_a + p_b < 1$ as parameters of the algorithm;
if $\mu$ is the mean of the prior on $J$, then we take $p_c(q) = 1 - p_a - p_b$, $p_d(q) = 0$ and, for $j>q$, $p_c(j) = (1- p_a - p_b) 2^{-\frac{j-q}{\mu-q}}$ and $p_d(j) = (1- p_a - p_b) (1-2^{-\frac{j-q}{\mu-q}})$.
This choice results in $p_c(\mu) = p_d(\mu)$ if $j=\mu$, $p_c(\mu) > p_d(\mu)$ if $j<\mu$, $p_c(\mu) > p_d(\mu)$ if $j<\mu$.

When perturbing the coefficients we perform simple (Gaussian) random walk MCMC steps; the standard deviation of the random walk was chosen such that we obtained an acceptance rate  of roughly 23\% for this type of move, as prescribed in 
\cite{Roberts:1997}.
Let $\phi_j$ be the joint density of $j$ i.i.d.\ standard normal random variables.
Our proposals correspond to a move $(j, \bm{k}, \bm{\theta})\to(j, \bm{k}, \bm{\theta} + \sigma \bm{u})$ which we accept with probability $\min\big(A(\bm u),1\big)$, with
\begin{equation*}
A(\bm u)
=	\frac{ \pi(j, \bm{k}, \bm{\theta} + \sigma \bm{u} | C^n)\, p_a\, \varphi_{j}(-\sigma \bm{u}) }{ \pi(j, \bm{k}, \bm{\theta}| C^n)\, p_a\, \varphi_{j}(\sigma \bm{u}) }
=	\frac{ \pi(j, \bm{k}, \bm{\theta} + \sigma \bm{u}| C^n) }{ \pi(j, \bm{k}, \bm{\theta}\,\,| C^n) }.
\end{equation*}

Moving a knot is also straightforward; one of the current $j-q$ knots, say $k_i$, is picked uniformly at random among those in $\bm{k},$ and we propose to change its location depending on how many of its neighboring position on the $j^{-2}$-spaced grid are free -- we say that two knots $k, k'$ are neighbors if $|k-k'|\le j^{-2}$.
This means that we propose a move $(j, \bm{k}, \bm{\theta})\to(j, \bm{k}', \bm{\theta})$ where $\bm{k}$ and $\bm{k}'$ differ only at the $i$-th position:
if $k_i$ has two free neighboring positions, then it moves to either of them with equal probability $c_i=c_i(k_{i-1},k_i,k_{i+1})=1/2$;
if $k_i$ only has one free neighboring position, then, with equal probability $c_i=1/2$, it either moves to this free position or it does not move at all;
if $k_i$ has no free neighboring positions then if does not move, with probability $c_i=1$.
These particular choices assure the reversibility of the moves.
We accept such a proposal with probability $\min\big(A(i),1\big)$ where $A(i)$ is given by
\begin{equation*}
A(i)
=	\frac{ \pi(j, (k_1, \dots,k_i',\dots,k_{j-q}), \bm{\theta}| C^n)\, p_b\, (j-q)^{-1}\, c_i }{ \pi(j,(k_1, \dots,k_i,\dots,k_{j-q}) , \bm{\theta}| C^n)\, p_b\, (j-q)^{-1}\, c_i }
=	\frac{ \pi(j, \bm{k'}, \bm{\theta}| C^n) }{ \pi(j, \bm{k}\,\, , \bm{\theta}| C^n) }.
\end{equation*}

Birth moves and death moves, where a new knot is respectively added and removed, are reverse moves of one another and so we will outline only how to perform the birth move.
We propose a move $(j, \bm{k}, \bm{\theta})\to(j+1, \bm{k}', \bm{\theta}')$ where we add a new knot to the vector $\bm{k}$ and a new coefficient to the vector $\bm{\theta}$.
In doing so, a new B-spline is introduced to the B-spline basis and a new B-spline coefficient is generated.
The new knot vector $\bm{k}'$ contains all knots from $\bm{k}$ rounded to the closest grid point on a $(j+1)^{-2}$ spaced grid with the extra knot then picked uniformly at random among the remaining free positions; call it $k_{i-1} < k' < k_i$.
Note that this construction does not prevent two knots in $\bm{k}'$ from occupying the same position;
such knot vectors have posterior probability 0, though, so that the probability of moving to such a state is zero.
The coefficients on this basis are then picked as $\bm{\theta}'=f(\bm{\theta},u) = (\theta_1, \dots, \theta_{m-1}, u, \theta_m, \dots, \theta_{j})$ where $f$ will be linear and invertible, and $u$ is a {random seed}, a normally distributed random number with mean $\eta(\bm{\theta})$, to be picked later, and variance 1.
The new knot will belong to the support of $q$ B-splines, namely the $i$-th through $(i+q-1)$-th B-splines and we pick the index $m$ in $\{i, \dots, i+q\}$ depending on the knot's position within the interval $[k_{i-1}, k_i]$; namely $m = i + \lfloor(q+1)(k'-k_{i-1})/(k_i-k_{i-1})\rfloor$, where $\lfloor a \rfloor$ is the largest integer smaller or equal to $a$.
The mean of the random seed $u$ will be picked as a weighted mean of the coefficients $\bm{\theta}$, namely, $\eta(\bm{\theta})=\sum_{i=1}^{m-1} w_i \theta_i+\sum_{i=m}^j w_{i-1} \theta_i$, where the weights $w_i$ are normalized and
\[
w_i \propto\; \int_0^TB_{\bm{k}_j',m}^q(t) B_{\bm{k}_j',i}^q(t)\,dt,\quad i=1, \dots, j+1.
\]
With probability $\min\big(A(j, k', u),1\big)$ we make the move $(j, \bm{k}, \bm{\theta})\to(j+1, \bm{k}', \bm{\theta}')$, with $\bm{\theta}'=f(\bm{\theta}, u)$ and $\bm{k}' = (k_1, \dots, k_{i-1}, k', k_i, \dots, k_{j-q})$, where
\begin{eqnarray*}
A(j, k', u) = \frac{ \pi(j+1, \bm{k}', f(\bm{\theta},u)| C^n)\, (1-p_a-p_b)p_d(j+1)\, (j-q+1)^{-1} }
{ \pi(j, \bm{k}, \bm{\theta}| C^n)\, (1-p_a-p_b)p_c(j)\, (j^{2}-j+q)^{-1} \varphi_1(u)  }|J_f|
\end{eqnarray*}
where $|J_f|$ is the Jacobian of the linear mapping described before.

Figure~\ref{fig:random} summarizes the outcome of the analysis. In the top panel it shows 
the posterior and 95\% point-wise credible intervals, based on 10,000 samples from 
posterior. 
The lower panel shows a histogram for the locations of the knots corresponding to the 
samples from the chain used to generate the top panel. Note that as expected,  
relatively many knots are placed in periods in which there are relatively many fluctuations in the intensity.

\begin{figure}[h!]
\includegraphics[width=14cm, height=6.56cm]{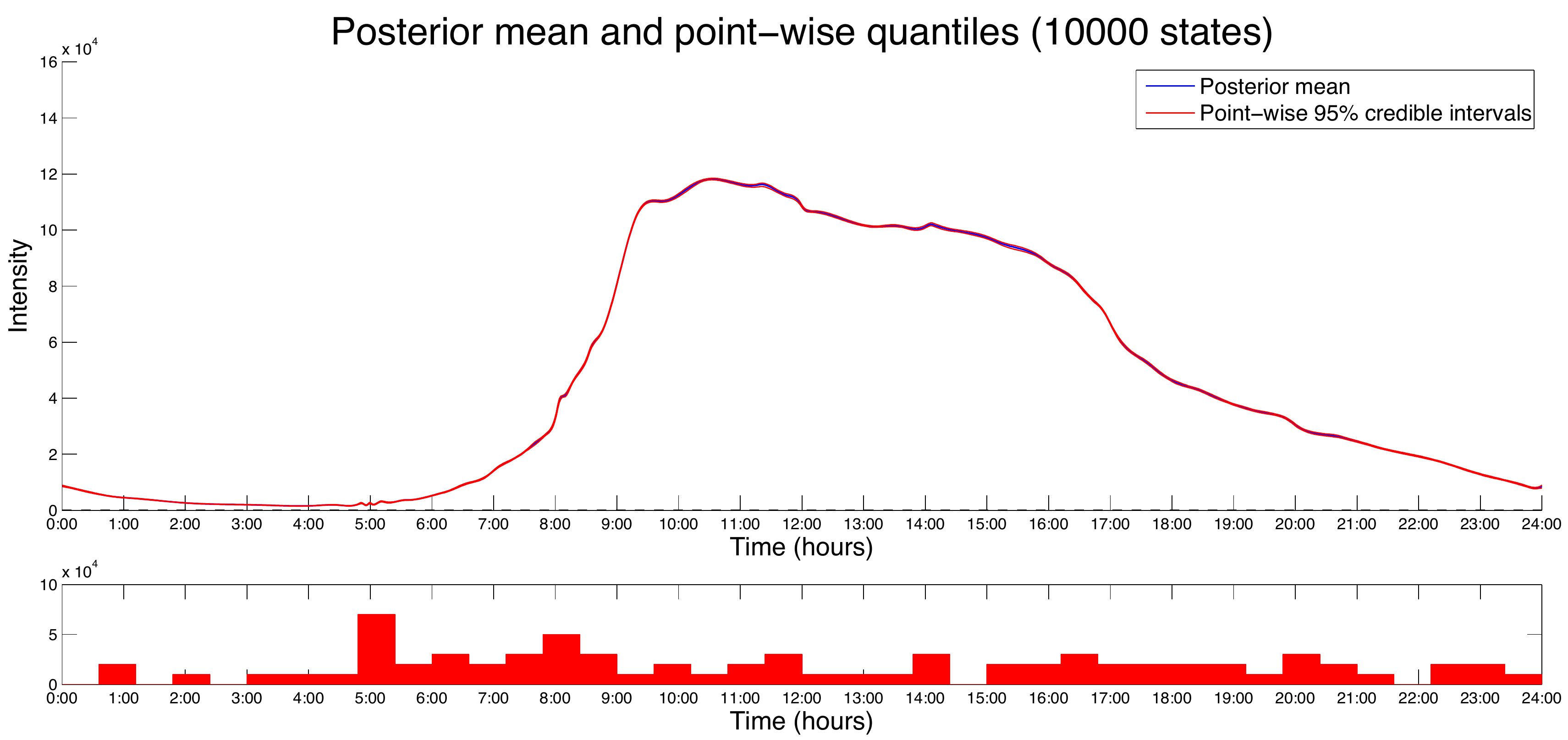}
\caption{
Top panel: posterior distribution of the intensity function $\lambda$ based on the thinned data. (Blue: posterior mean, red: point-wise $95\%$ credible intervals).
Lower panel: posterior distribution of the knot locations (Histogram).}
\label{fig:random}
\end{figure}

Due to the large event rate (almost $3$ million counts in total), the credible bands are very narrow.
To illustrate the dependence on the amount of data we ran the analysis again with a thinned out 
dataset. We randomly removed counts, retaining about $1,000$ counts. The same analysis 
then leads to the posterior plot given in Figure \ref{thinned}. In this case, the 
uncertainty in the posterior distribution becomes clearly visible.

\begin{figure}[h!]
\begin{center}
\includegraphics[width=14cm, height=6.56cm]{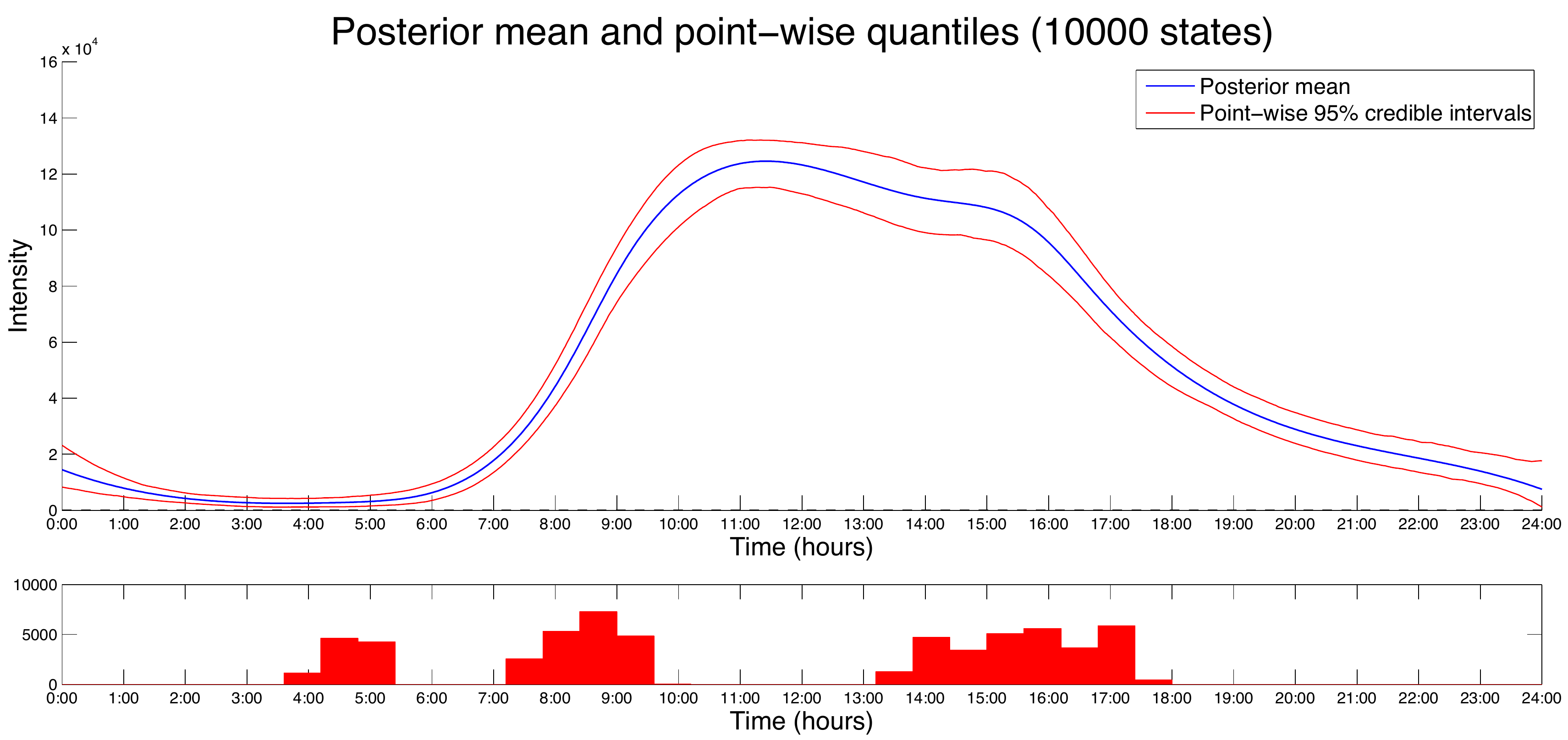}
\end{center}
\caption{
Top panel: posterior distribution of the intensity function $\lambda$ based on the thinned data. (Blue: posterior mean, red: point-wise $95\%$ credible intervals).
Lower panel: posterior distribution of the knot locations (Histogram).}
\label{thinned}
\end{figure}

We find that  the prior that we defined in Section \ref{sec:prior} is a computationally feasible 
choice for nonparametric Bayesian  intensity smoothing in the context of this kind of periodic count data. 
In the next section we analyze its fundamental theoretical performance. 
See in particular Theorem \ref{theorem:spline} in Section \ref{sec:spline}.

\section{Theoretical results}
\label{sec:theory}

\subsection{Contraction rates for general priors}

We derive our theoretical results for the particular prior we 
used in the Section \ref{sec:count} from general rate of contraction results that we present in this section. 
These are in the spirit of the general theorems about convergence rates of nonparametric Bayes 
procedures that were first developed for density estimation (\citealt{Ghosal:2000}) 
and later for various other statistical settings; see for instance \cite{vanderMeulen:2006}, 
\cite{Ghosal:2007}, \cite{Panzar:2009}. 
Here we complement this literature with general rate results regarding intensity estimation for 
inhomogenous Poisson processes. 
These results are not only applicable to the spline priors we consider in this paper, 
but may also be used to analyze contraction rates of other priors. 
Moreover, we formulate the theorems not just for the case that we have discrete observations of 
aggregated data, as in our data example, but also for the case that the full counting process is observed. 

The setting is as in Section \ref{sec:data}. We fix a period $T > 0$. In the full observations case 
we assume that for $n \in \NN$, we observe an inhomogeneous Poisson process $N^n= (N^n_t: t \in [0,nT])$ up till time $nT$, 
with a $T$-periodic intensity function $\lambda$.
Equivalently, we can say we observe $n$ independent inhomogeneous Poisson processes $N^{(1)}, \ldots, N^{(n)}$, 
indexed by $[0,T]$, and with a common  intensity function $\lambda$, which is a positive, 
integrable function on $[0,T]$. It is well known that the law of $N$ under the intensity function $\lambda$
is equivalent to the law of a standard Poisson process and that the corresponding likelihood is given by 
\begin{align}\label{ref:lik}
p(N^n \given \lambda) = e^{-\int_0^{nT}(\lambda(t)-1)\,dt + \int_0^{nT} \log(\lambda(t))\,dN^n_t}
\end{align}
(see for instance \citealt{Jacod:2003}).

In the remainder of this section we will derive results will involve asymptotics in the number of observed periods $n$.
Alternatively, one could group the data and define a new Poisson process $M^n=(M^n_t: t \in [0,T])$ on $[0, T]$,
\[
M_t^n = \sum_{i=1}^n N_t^{(i)}, \quad t\in[0,T],
\]
which is a Poisson process with intensity $n\lambda$ on $[0,T]$.
By identifying the function $\lambda(t)$ on $[0,T]$ with its periodical extension $\lambda(t\pmod{T})$ on $\mathbb{R}$, the likelihood of this process equals \eqref{ref:lik}.
If we assume that $\lambda$ is (upper) bounded, the results from this section also imply asymptotics in terms of the number of events (or scale) of the intensity function $\lambda$.
Asymptotics in the length of the trajectory of the process that is observed is a third equivalent formulation for our results.

We consider prior distributions that charge strictly positive, continuous functions. 
Given such a prior $\Pi_n$ on $\lambda$ (which we allow to depend on $n$) 
we can then compute the  corresponding posterior distribution $\Pi_n(\cdot \given N^n)$ by Bayes' 
formula
\[
\Pi_n(\lambda \in B\given N^n) = \frac{\int_B p(N^n\given \lambda) \,\Pi_n(d\lambda)}
{\int p(N^n\given \lambda) \,\Pi_n(d\lambda)}.
\]
Formally we can view the prior  $\Pi_n$ as a measure on the 
space $\Lambda \subset C[0,T]$ of 
all continuous, strictly positive functions on $[0,T]$, endowed with its Borel $\sigma$-field.  If we endow 
$\Lambda$ with the uniform norm, the likelihood is a continuous function on $\Lambda$.
Hence, the posterior is a well-defined measure on $\Lambda$.

The following theorem considers the frequentist setting in which the data are assumed to be 
generated by an unknown, ``true'' intensity function $\lambda_0$. 
It gives conditions on the prior $\Pi_n$ under which 
the posterior $\Pi_n( \cdot\given N^n)$
contracts around the true $\lambda_0$ at a certain rate as the number of observed periods tends to infinity. 
The assumptions and conclusions of the theorem are formulated in terms of 
various distances on the intensity functions. For a continuous function $f$ on $[0,T]$ we define
the norms $\|f\|_2$ and $\|f\|_\infty$ as usual by 
\[
\|f\|^2_2 = \int_0^T f^2(t)\,dt, \qquad \|f\|_\infty = \sup_{t \in [0,T]}|f(t)|. 
\]
For a set of positive continuous functions $\FF$ we write $\FF^c$ for its complement and $\sqrt\FF = \{\sqrt f: f \in \FF\}$.
For $\eps > 0$ and a norm $\|\cdot\|$ on $\FF$, let $N(\eps, \FF, \|\cdot\|)$ be the minimal number of balls of $\|\cdot\|$-radius $\eps$ needed to cover $\FF$.

\begin{theorem}[Contraction rate for full observations]\label{theorem:con}
Assume that $\l_0$ is bounded away from $0$.
Suppose that for positive sequences $\tilde\eps_n, \bar\e_n \to 0$ such that $n(\tilde \e_n \wedge\bar\eps_n)^2 \to \infty$ as $n\to\infty$, and  constants $c_1, c_2 > 0$ it holds that for all $c_3 > 1$, there exist subsets $\Lambda_n \subset \Lambda$ and a constant $c_4 > 0$ such that
\begin{align}
\label{eq:pm}\Pi_n(\l: \|\l-\l_0\|_\infty \le \tilde\e_n) & \ge c_1e^{-c_2n\tilde\eps^2_n}, \\
\label{eq:rm}\Pi_n(\Lambda_n^c) & \le e^{-c_3 n\tilde\eps^2_n},\\
\label{eq:en}\log N(\bar\eps_n, \sqrt\Lambda_n, \|\cdot\|_2) & \le c_4n\bar\eps^2_n.
\end{align}
Then for $\e_n = \tilde \e_n \vee \bar\e_n$ and  all sufficiently large $M > 0$, 
\begin{equation}\label{eq:ster}
\EE_{\l_0}\Pi_n(\lambda \in \Lambda: \|\sqrt\l-  \sqrt\l_0\|_2 \ge M\e_n \given N^n ) \to 0
\end{equation}
as $n \to \infty$. 
\end{theorem}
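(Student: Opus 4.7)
The plan is to follow the Ghosal--Ghosh--van der Vaart template for posterior contraction rates, adapted to the inhomogeneous Poisson process setting, working with the Hellinger-type semimetric $d(\l,\l_0):=\|\sqrt{\l}-\sqrt{\l_0}\|_2$, which is the natural loss for this model.

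First I would translate the uniform prior mass condition \eqref{eq:pm} into a Kullback--Leibler neighbourhood condition. For the single-period Poisson likelihood, the KL divergence and log-likelihood-ratio variance are
$$K(P_{\l_0},P_\l)=\int_0^T\!\bigl(\l-\l_0-\l_0\log(\l/\l_0)\bigr)dt,\qquad V(P_{\l_0},P_\l)=\int_0^T\!\l_0\log^2(\l/\l_0)\,dt,$$
and a second-order Taylor expansion of $\log$, using that $\l_0$ is bounded away from $0$, shows that both are $\lesssim\|\l-\l_0\|_\infty^2$ whenever $\|\l-\l_0\|_\infty$ is small. Independence across the $n$ periods multiplies both quantities by $n$, so \eqref{eq:pm} implies the standard KL prior-mass condition at level $n\tilde\e_n^2$. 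The usual evidence-lower-bound lemma then gives, with $\PP_{\l_0}$-probability tending to $1$,
$$D_n:=\int\frac{p(N^n\mid\l)}{p(N^n\mid\l_0)}\,\Pi_n(d\l)\ge c_1 e^{-(c_2+2)n\tilde\e_n^2}.$$

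Second, I would construct exponential tests. The key Poisson-process ingredient is the Hellinger-affinity identity $\rho(P_\l^n,P_{\l_0}^n)=\exp(-\tfrac{n}{2}d(\l,\l_0)^2)$, which follows from \eqref{ref:lik} by direct computation and tensorisation over the $n$ periods. Le Cam's theorem then produces, for each $\l_1$, a test $\phi_{\l_1}$ whose type I error and whose type II error over the ball $\{d(\l,\l_1)\le d(\l_0,\l_1)/2\}$ are both at most $e^{-Kn\,d(\l_0,\l_1)^2}$ for a universal $K>0$. Slicing $\Lambda_n\cap\{d(\cdot,\l_0)\ge M\e_n\}$ into shells of thickness $M\e_n$, covering each shell by $\exp(c_4 n\e_n^2)$ $d$-balls using \eqref{eq:en} together with monotonicity of covering numbers (which applies since $\e_n\ge\bar\e_n$), and letting $\phi_n$ be the maximum of the resulting local tests, yields
$$\EE_{\l_0}\phi_n\le\sum_{j\ge 1}e^{(c_4-Kj^2M^2)n\e_n^2},\qquad\sup_{\l\in\Lambda_n,\,d(\l,\l_0)\ge M\e_n}\EE_\l(1-\phi_n)\le e^{-KM^2 n\e_n^2},$$
both $o(1)$ as soon as $KM^2>c_4$.

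Finally, I would combine these ingredients in the standard posterior decomposition with $A_n=\{\l\in\Lambda_n:d(\l,\l_0)\ge M\e_n\}$,
$$\Pi_n\bigl(d(\l,\l_0)\ge M\e_n\mid N^n\bigr)\le\phi_n+\Pi_n(\Lambda_n^c\mid N^n)+\frac{1}{D_n}\int_{A_n}(1-\phi_n)\frac{p(N^n\mid\l)}{p(N^n\mid\l_0)}\Pi_n(d\l);$$
on the denominator event, Fubini bounds the $\PP_{\l_0}$-expected last term by $c_1^{-1}e^{((c_2+2)-KM^2)n\e_n^2}$, while \eqref{eq:rm} bounds the expected second term by $c_1^{-1}e^{((c_2+2)-c_3)n\tilde\e_n^2}$, both $o(1)$ for $c_3$ and $M$ sufficiently large (using $\e_n\ge\tilde\e_n$), which together with the test bound yields \eqref{eq:ster}. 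The main obstacle is the test construction: one needs the explicit Hellinger-affinity identity and Le Cam's theorem to produce local tests whose error probabilities decay exponentially in the $d$-distance, and then a Birg\'e-type combination over an entropy cover of $\sqrt\Lambda_n$; once these are in place the remaining steps are a routine adaptation of the density-estimation proof of \cite{Ghosal:2000}.
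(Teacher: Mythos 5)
Your proposal is correct and is in substance the same argument as the paper's: the paper also exploits the i.i.d.\ product structure over periods, computes the Hellinger affinity $\int\sqrt{p_\l p_{\l_0}}=e^{-\frac12\int_0^T(\sqrt{\l}-\sqrt{\l_0})^2\,dt}$ and the explicit $K$ and $V$ formulas, and bounds $K,V$ by a multiple of $\|\l-\l_0\|_\infty^2$ using that $\l_0$ is bounded away from $0$ and $\infty$. The only real difference is that the paper then verifies the hypotheses of Theorem 2.1 of Ghosal and van der Vaart (2001) and invokes it as a black box, whereas you unroll that theorem's internals (evidence lower bound, Le Cam/Birg\'e tests over an entropy cover, the three-term posterior decomposition); this buys self-containedness at the cost of re-proving standard machinery. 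One small imprecision in your version: the testing exponent should be $n\,(d^2(\l_0,\l_1)\wedge 1)$ rather than $n\,d^2(\l_0,\l_1)$, since the Hellinger distance saturates ($h^2=2(1-e^{-d^2/2})\asymp d^2\wedge 1$); this does not break the shell argument because the total entropy is $o(n)$, but it needs to be said, and the paper sidesteps it by first contracting in $h$ and only converting to $\|\sqrt{\l}-\sqrt{\l_0}\|_2$ at the very end, where $M\eps_n\le 1$ makes the two equivalent.
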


The proof of this theorem is given in  Appendix \ref{sec:a1}. 
The assumptions of the theorem parallel those of similar theorems obtained
earlier for other settings including density estimation, regression, and 
classification. The first condition \eqref{eq:pm}, the 
{\em prior mass condition}, requires that the prior
puts sufficient mass near the truth. Conditions \eqref{eq:rm}--\eqref{eq:en}
together require that most of the prior mass, quantified in the sense 
of the {\em remaining mass condition} \eqref{eq:rm}, is concentrated 
on {\em sieves} $\L_n$ which are ``small'' in the sense of metric entropy, 
quantified by the {\em entropy condition} \eqref{eq:en}.

The condition requiring that $\lambda_0$ be bounded away from zero, while needed in the proof, is essentially innocuous.
Indeed, if this assumption does not hold (or if it is not know whether it holds), one might simply modify the Poisson data by adding to it an independently generated homogeneous Poisson process with intensity $1$, say.
The resulting data can be seen as a realization of a Poisson process with intensity $1+\lambda_0$ (which is bounded away from zero by at least $1$) and Theorem~\ref{theorem:con} can be applied to it to make inference on $\lambda_0+1$ and therefore on $\lambda_0$.
This effectively allows us to make inference on intensities which are not bounded away from zero.

The proof of the theorem shows that conditions \eqref{eq:rm}--\eqref{eq:en}
can in fact be slightly weakened, at the cost of using more complicated distance
measures on the intensities. The conditions in the theorem are more 
easy to work with when studying concrete priors and are expected to 
give sharp results in many cases.
We note that if under the prior all intensities are bounded away from $0$, 
then the set $\sqrt{\Lambda_n}$ in \eqref{eq:en} may be replaced by 
$\Lambda_n$. Moreover, if all intensities are uniformly bounded by 
a common constant under the prior, then the square-root norm $\|\sqrt\cdot\|_2$
in \eqref{eq:ster} may be replaced by the $L^2$-norm $\|\cdot\|_2$ itself. 
In the next section we verify the conditions of the theorem for the spline priors 
used in Section \ref{sec:data}.

In the case of discrete observations we only have access, for some $m \in \NN$ and $\Delta = T/m$, to 
aggregated counts $C_{ij}$ for $i = 1, \ldots, n$ and $j =1, \ldots, m$, given by \eqref{eq:c}.
As before, we summarize these data using the notation $C^n = (C_{ij}: i=1, \ldots, n, j=1, \ldots, m)$. 
As explained in Section \ref{sec:data} the likelihood is in that case given by \eqref{eq:lik1}, 
where the $\lambda_j$'s are defined as in \eqref{eq:l}. Consequently, the 
discrete-observations posterior is given by 
\[
\Pi_n(\lambda \in B\given C^n) = \frac{\int_B p(C^n\given \lambda) \,\Pi_n(d\lambda)}
{\int p(C^n\given \lambda) \,\Pi_n(d\lambda)}.
\]
In this case it is clear that we can not consistently identify the whole intensity function $\l$
from the data, but only the integrals $\lambda_1, \ldots, \lambda_m$. 
In the following theorem, which deals with the convergence of the posterior distribution
in the case of discrete observations, we therefore measure the convergence using a semi-metric 
that identifies intensity functions with the same integrals over time intervals 
in which we make observations.
For $\l, \l' \in \L$, we define the distance $\rho$ by setting 
\[
\rho^2(\lambda, \l')  = \sum_{j=1}^m \Big(\sqrt{\l_j\phantom{'}} - \sqrt{\l_j'}\Big)^2
= \sum_{j=1}^m \Big(\sqrt{\int_{(j-1)\Delta}^{j\Delta} \lambda(t)\,dt} - 
\sqrt{\int_{(j-1)\Delta}^{j\Delta} \lambda'(t)\,dt}\Big)^2.
\]
The theorem has exactly the same assumptions on the prior as Theorem \ref{theorem:con} above, but 
gives a contraction rate relative to the distance $\rho$.

\begin{theorem}[Contraction rate for discrete observations]\label{theorem:dis}
Assume that $\l_0$ is bounded away from $0$. 
Suppose that for postive sequences $\tilde\eps_n, \bar\e_n \to 0$ such that $n(\tilde \e_n \wedge\bar\eps_n)^2 \to \infty$ as $n\to\infty$,
and  constants $c_1, c_2 > 0$  it holds that for all $c_3 > 1$, there exist 
  subsets $\Lambda_n \subset \Lambda$ and a constant $c_4 > 0$ such that  
\eqref{eq:pm}--\eqref{eq:en} hold. 
Then for $\e_n = \tilde \e_n \vee \bar\e_n$ and  all sufficiently large $M > 0$, 
\[
\EE_{\l_0}\Pi_n(\lambda \in \Lambda: \rho(\l, \l_0) \ge M\e_n \given C^n ) \to 0
\]
as $n \to \infty$. 
\end{theorem}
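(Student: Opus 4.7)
The plan is to mirror the proof of Theorem \ref{theorem:con}, working in the Ghosal--Ghosh--van der Vaart framework, but with $\rho$ replacing $\|\sqrt\cdot-\sqrt\cdot\|_2$ as the ambient semi-metric. The starting point is the observation that $\rho$ is the natural Hellinger-type distance for the aggregated Poisson model \eqref{eq:lik1}. Indeed, a direct computation gives that the Hellinger affinity between two single Poisson laws is $\exp\bigl(-(\sqrt{\mu}-\sqrt{\nu})^2/2\bigr)$, so that under independence the affinity between the laws of $C^n$ under $\lambda$ and $\lambda'$ equals
\[
\prod_{i=1}^n\prod_{j=1}^m e^{-(\sqrt{\lambda_j}-\sqrt{\lambda_j'})^2/2}=e^{-n\rho^2(\lambda,\lambda')/2}.
\]
This identity drives the whole argument.

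First I would construct the tests. Exactly as in the proof of Theorem \ref{theorem:con}, for any pair $\lambda_0,\lambda_1$ with $\rho(\lambda_0,\lambda_1)\ge M\e_n$ a standard likelihood-ratio test based on $C^n$ has type-I and type-II error probabilities bounded by $\exp(-cM^2n\e_n^2)$; this is an immediate consequence of the affinity computation above together with the Cauchy--Schwarz/Le Cam bound. These local tests are then combined into a global test on the sieve $\Lambda_n$ via the entropy assumption \eqref{eq:en}. The key bridge is the inequality
\[
\rho^2(\lambda,\lambda')
=\sum_{j=1}^m\Bigl(\|\sqrt\lambda\|_{L^2(I_j)}-\|\sqrt{\lambda'}\|_{L^2(I_j)}\Bigr)^2
\le\sum_{j=1}^m\|\sqrt\lambda-\sqrt{\lambda'}\|^2_{L^2(I_j)}
=\|\sqrt\lambda-\sqrt{\lambda'}\|_2^2,
\]
which follows from the reverse triangle inequality in $L^2(I_j)$ applied to $\sqrt\lambda$. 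Consequently $N(\bar\e_n,\sqrt{\Lambda_n},\|\cdot\|_2)$ upper bounds the $\rho$-covering number of $\Lambda_n$, and \eqref{eq:en} transfers verbatim to give a global test with error at most $\exp(-cM^2n\e_n^2)\cdot\exp(c_4n\bar\e_n^2)$ outside a $\rho$-ball of radius $M\e_n$ inside $\Lambda_n$.

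Next I would verify the prior-mass/evidence lower bound. The per-period KL divergence and log-likelihood variance of the discrete model equal
\[
K\bigl(p(\cdot\mid\lambda_0),p(\cdot\mid\lambda)\bigr)=\sum_{j=1}^m\Bigl(\lambda_{0,j}\log\tfrac{\lambda_{0,j}}{\lambda_j}+\lambda_j-\lambda_{0,j}\Bigr)
\]
and an analogous expression for the variance, both of which are bounded by $C\|\lambda-\lambda_0\|_\infty^2$ on the set $\{\|\lambda-\lambda_0\|_\infty\le\tilde\e_n\}$, using that $\lambda_0$ is bounded away from $0$ and that sup-norm closeness implies $\lambda$ is also bounded away from $0$ for $n$ large. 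Hence \eqref{eq:pm} yields the standard evidence lower bound $\int p(C^n\mid\lambda)/p(C^n\mid\lambda_0)\,\Pi_n(d\lambda)\ge c_1 e^{-(c_2+1)n\tilde\e_n^2}$ with $\PP_{\lambda_0}$-probability tending to one (Ghosal--Ghosh--van der Vaart, Lemma 8.1).

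Combining these ingredients in the usual way---decompose the posterior $\rho$-mass outside the $M\e_n$-ball according to whether $\lambda\in\Lambda_n$ or not, bound the first piece by the global test plus remaining mass, and control the second via \eqref{eq:rm}---yields the stated conclusion once $c_3$ (and hence $M$) is chosen large enough. I do not anticipate a serious obstacle: the proof is structurally identical to Theorem \ref{theorem:con}, the only substantive new ingredient being the identification of $\rho$ as the Hellinger metric of the discrete model and the entropy-transfer inequality $\rho\le\|\sqrt\cdot-\sqrt\cdot\|_2$.
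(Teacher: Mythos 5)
Your proposal is correct and follows essentially the same route as the paper: identify $\rho$ as the Hellinger metric of the product-Poisson model via the affinity $e^{-(\sqrt{\mu}-\sqrt{\nu})^2/2}$, bound the discrete Kullback--Leibler divergence and variance by a multiple of $\|\lambda-\lambda_0\|_\infty^2$ using that $\lambda_0$ is bounded away from zero, and transfer the entropy condition through the inequality $\rho(\lambda,\lambda')\le\|\sqrt{\lambda}-\sqrt{\lambda'}\|_2$ (which the paper obtains by Cauchy--Schwarz and you obtain equivalently by the reverse triangle inequality in $L^2$). The only cosmetic difference is that you sketch the test construction and evidence lower bound explicitly, whereas the paper invokes Theorem 2.1 of Ghosal and van der Vaart (2001), which packages exactly those steps.
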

The proof of the theorem is given in Appendix \ref{sec:a2}

In the next section we apply the theoretical results derived above to the spline prior considered before.

\subsection{Contraction rates for the spline prior}
\label{sec:spline}

Having the general rate of contraction results given by Theorems \ref{theorem:con} 
and \ref{theorem:dis} at our disposal we can use them to study the performance
of the spline-based prior defined in Section \ref{sec:prior}. We fix the order $q \ge 2$ 
of the splines that are used. 
As before, let $N^n$ be the a full path up till time $nT$ of an inhomogenous Poisson process
$N$ with $T$-period intensity $\l_0$ and let $C^n$ be the discrete-time counts 
$C^n = (C_{ij}: i=1, \ldots, n, j=1, \ldots, m)$, with $C_{ij}$ as in \eqref{eq:c}. 

The contraction rate of the posterior will depend on the regularity of the true 
intensity function,  measured in H\"older sense. For $\alpha > 0$, 
let $C^\alpha[0,T]$ be the space of functions on $[0,T]$ with H\"older smoothness 
$\alpha$. (For $\lfloor\alpha\rfloor$ the greatest integer strictly smaller than 
$\alpha$, having $f \in C^\alpha[0,T]$ means that $f$ has $\lfloor\alpha\rfloor$ 
derivatives and that the highest derivative $f^{(\lfloor\alpha\rfloor)}$
is H\"older-continuous of order $\alpha - \lfloor\alpha\rfloor$.)

\begin{theorem}[Contraction rate for the spline prior]\label{theorem:spline}
Assume the true intensity function $\l_0$ belongs to $C^\alpha[0,T]$
for some $\alpha \in (0,q]$, and $M_1 \le \l_0 \le M_2$. Consider the prior $\Pi$
constructed in Section \ref{sec:prior}. For all $p >1 $ and all sufficiently large $M > 0$
we have
\[
\EE_{\l_0}\Pi_n(\lambda \in \Lambda: \|\l-  \l_0\|_2 \ge 
M\Big(\frac{n}{\log^p n}\Big)^{-\frac{\alpha}{1+2\alpha}} \given N^n ) \to 0
\]
and 
\[
\EE_{\l_0}\Pi_n(\lambda \in \Lambda: \rho(\l, \l_0) \ge M
\Big(\frac{n}{\log^p n}\Big)^{-\frac{\alpha}{1+2\alpha}} \given C^n ) \to 0
\]
as $n \to \infty$.
\end{theorem}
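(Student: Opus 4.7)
The plan is to apply Theorems~\ref{theorem:con} and~\ref{theorem:dis} with $\tilde\eps_n = \bar\eps_n = (n/\log^p n)^{-\alpha/(1+2\alpha)}$, which reduces the theorem to verifying the prior mass, remaining mass and entropy conditions \eqref{eq:pm}--\eqref{eq:en} for the spline prior $\Pi$ of Section~\ref{sec:prior}.

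For the sieves, I would set $j_n = \lceil C_1 n\tilde\eps_n^2/\log n\rceil$ and take $\Lambda_n = \{s_{\bm\theta, \bm k} : q \le J \le j_n\}$. The remaining mass condition reduces to a Poisson tail bound: standard estimates give $\Pi(J > j_n) \le e^{-c j_n \log(j_n/\mu)}$, and since $j_n \log j_n \asymp n\tilde\eps_n^2$, choosing $C_1$ large relative to $c_3$ secures \eqref{eq:rm}. For the entropy, at level $j$ there are at most $\binom{\lfloor Tj^2\rfloor}{j-q} \le (eTj^2)^{j-q}$ knot configurations, and for fixed knots the coefficient vector lies in $[M_1, M_2]^j$, which admits an $\ell^\infty$-net of cardinality $((M_2-M_1)/\bar\eps_n)^j$. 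The B-spline partition-of-unity inequality $\|s_{\bm\theta, \bm k} - s_{\bm\theta', \bm k}\|_\infty \le \|\bm\theta - \bm\theta'\|_\infty$ converts this to an $L^2$-net for $\Lambda_n$ of log-cardinality bounded by a constant times $j_n \log(j_n/\bar\eps_n) \asymp n\bar\eps_n^2$, yielding \eqref{eq:en}.

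The prior mass condition is the main technical step. Fix $j^* = \lceil C_2 \tilde\eps_n^{-1/\alpha}\rceil$ and, by classical B-spline approximation theory, choose quasi-uniform knots $\bm k^* \in \mathcal{K}_{j^*}$ and coefficients $\bm\theta^* \in [M_1, M_2]^{j^*}$ (e.g., via a Schoenberg-type operator, using $M_1 \le \lambda_0 \le M_2$) such that $\|s_{\bm\theta^*,\bm k^*} - \lambda_0\|_\infty \lesssim (j^*)^{-\alpha} \le \tilde\eps_n/2$. Rounding each ideal knot to the nearest point of the prior's $1/(j^*)^2$-grid costs at most $1/(j^*)^2$, preserves quasi-uniformity of the mesh, and inflates the approximation error only by $O((j^*)^{-\alpha})$. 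A further $\|\bm\theta - \bm\theta^*\|_\infty \le \tilde\eps_n/2$ perturbation keeps $\|s_{\bm\theta, \bm k} - \lambda_0\|_\infty \le \tilde\eps_n$. The prior probability of these joint events factors as the Poisson mass $\Pi(J = j^*) \ge e^{-O(j^*\log j^*)}$, the uniform-without-replacement knot probability $\ge \binom{\lfloor T(j^*)^2\rfloor}{j^*-q}^{-1} \ge e^{-O(j^*\log j^*)}$, and the uniform coefficient probability $(\tilde\eps_n/(M_2-M_1))^{j^*}$. The total log-probability is bounded below by $-O(j^*\log n) = -O(\tilde\eps_n^{-1/\alpha}\log n)$, which is $\ge -c_2 n\tilde\eps_n^2$ precisely because the extra factor of $(\log n)^{p\alpha/(1+2\alpha)}$ built into $\tilde\eps_n$ buffers the bare balance $n\tilde\eps_n^2 \asymp \tilde\eps_n^{-1/\alpha}\log n$, with any $p > 1$ sufficient.

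The main obstacle is simultaneously controlling the grid-rounded knots and keeping the Schoenberg-type coefficients inside $[M_1, M_2]$: one needs the mesh-ratio of the perturbed knot sequence to remain uniformly bounded so that the $(j^*)^{-\alpha}$ rate survives, and one needs the coefficient bounds to be compatible with the prior's uniform distribution on $[M_1, M_2]$. Once \eqref{eq:pm}--\eqref{eq:en} are in hand, Theorem~\ref{theorem:con} yields contraction in $\|\sqrt\lambda - \sqrt\lambda_0\|_2$; since every draw from $\Pi$ satisfies $M_1 \le s_{\bm\theta, \bm k} \le M_2$ by partition of unity and $\lambda_0 \in [M_1, M_2]$, the elementary bound $|\lambda - \lambda_0| \le (\sqrt\lambda + \sqrt\lambda_0)|\sqrt\lambda - \sqrt\lambda_0| \le 2\sqrt{M_2}|\sqrt\lambda - \sqrt\lambda_0|$ transfers this to the $L^2$-rate in the first conclusion, and Theorem~\ref{theorem:dis} delivers the analogous $\rho$-contraction for the discrete-observation posterior.
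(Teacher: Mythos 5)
Your route is genuinely different from the paper's. The paper does not verify \eqref{eq:pm}--\eqref{eq:en} for the spline prior from scratch: after recording the Poisson tail bounds on $J$, the mesh-width/sparsity probabilities for the gridded knots, and the coefficient ball probabilities, it invokes Theorem~1 of \cite{Belitser:2012b}, which packages exactly the approximation-theoretic work (existence of a good spline approximant compatible with the prior, sieve construction, entropy count) into the conclusions \eqref{eq:pm2}--\eqref{eq:en2}; the remainder of the paper's proof is bookkeeping of four logarithmic exponents $p,q,r,s$ to reconcile \eqref{eq:cc} with the hypotheses of Theorems~\ref{theorem:con} and~\ref{theorem:dis}. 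You instead re-derive the content of that cited theorem directly. Your sieve, Poisson-tail, and entropy arguments are sound (note only that \eqref{eq:en} is stated for $\sqrt{\Lambda_n}$, which your $L^2$-net for $\Lambda_n$ handles since the prior draws are bounded, as the paper's remark after Theorem~\ref{theorem:con} observes), your log-power balance is cleaner than the paper's, and your final transfer from $\|\sqrt\l-\sqrt{\l_0}\|_2$ to $\|\l-\l_0\|_2$ via the uniform bound $M_2$ is exactly the mechanism the paper alludes to.

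There is, however, a genuine gap in your prior-mass step, precisely at the point you yourself flag as ``the main obstacle.'' You need a spline $s_{\bm\theta^*,\bm k^*}$ with $\|s_{\bm\theta^*,\bm k^*}-\l_0\|_\infty\lesssim (j^*)^{-\alpha}$ \emph{and} coefficients $\bm\theta^*\in[M_1,M_2]^{j^*}$, since the prior has no mass outside that cube. The Schoenberg variation-diminishing operator does keep coefficients in $[M_1,M_2]$ (they are point evaluations of $\l_0$), but it is only second-order accurate, so it delivers at best the rate $(j^*)^{-(\alpha\wedge 2)}$ and fails for $\alpha>2$ --- which is squarely in the range the theorem covers (the application uses $q=4$, allowing $\alpha$ up to $4$). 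Conversely, the de Boor--Fix quasi-interpolants that do achieve order $(j^*)^{-\alpha}$ produce coefficients that are only bounded by a constant multiple of $\|\l_0\|_\infty$ (the convex-hull property $\min_i\theta_i\le s\le\max_i\theta_i$ goes the wrong way here), so they need not lie in $[M_1,M_2]$. Bridging this --- showing that \emph{some} coefficient vector in the prior's support achieves the optimal approximation order --- is nontrivial and is exactly what the paper outsources to Theorem~1 of \cite{Belitser:2012b}. As written, your argument asserts the needed approximant rather than constructing it, so this step must either be proved or, as the paper does, cited.
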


Note that up to a logarithmic factor, the rate of contraction in the theorem is the optimal rate\index{Rate of convergence!minimax} $n^{-\alpha/(1+2\alpha)}$ for estimating an $\alpha$-regular function. 
Moreover, the prior does not depend on $\alpha$\index{Estimator!adaptive}.
Hence the procedure automatically adapts to the smoothness of the intensity function, up to the order of the splines that are used.
This theorem deals with the case  that we have known bounds $M_1$ and $M_2$ for the intensity.
The existence lower bound $M_1>0$ is not restrictive since it can be enforced by adding a homogeneous Poisson process with known intensity to the data.

\section{Concluding remarks}
\label{sec:con}

In this paper we work specific spline-based prior for doing nonparametric Bayesian intensity smoothing for inhomogeneous Poisson processes. 
We show that the method is both practically feasible and is underpinned by theoretical performance guarantees in the form of adaptive rate-optimality results. 

Extensions of our results in several directions are possible. 
In particular, with more work it is possible to drop the assumption that we know an a-priori bound on the unknown intensity, which may be undesirable or impossible in certain situations.
An obvious extension is then to put a prior on the  bound.
Computationally this makes the procedure more demanding, but numerical investigations indicate it is still feasible.
Theoretical results can be obtained for that more general setting as well. 
Among other things this involves an extension of Lemma \ref{lemma:bounds}. 
Having a prior on the upper bound for $\lambda_0$ may deteriorate the convergence rate however.
We expect that the optimal rate will only be attained if the prior on the bound has sufficiently thin tails.

Another desirable theoretical extension would be to obtain ``local'' rate of convergence results.
Our present results deal with global norms on the intensity functions. 
It is conceivable however that convergence is faster in regions where the intensity fluctuates relatively little, and faster in others. 
More work is necessary to derive theorems that describe this phenomenon.

\bigskip

\appendix

\section{Proofs}

\subsection{Proof of Theorem \ref{theorem:con}}
\label{sec:a1}

A useful observation is that we can view the statistical problem to which 
the  theorem applies as a density estimation problem for functional data. 
Indeed, in the full observations case we observe a sample $N^{(1)}, \ldots, N^{(n)}$, 
which are independent, identically distributed random elements in the Skorohod space $D[0,T]$
of c\`adl\`ag (right-continuous functions with left-hand limits) on $[0,T]$ (see \citealt{Jacod:2003}, 
Chapter VI). Under the intensity function $\lambda$, the density $p_\lambda$ of $N^{(1)}$ relative to the 
law of a standard Poisson process indexed by $[0,T]$ is given by 
\[
p_\lambda(N) = e^{-\int_0^T(\lambda(t)-1)\,dt + \int_0^T \log(\lambda(t))\,dN_t}
\]
(e.g.\ \citealt{Jacod:2003}, Chapter III). 
Hence, the density estimation results of 
\cite{Ghosal:2000}, \cite{Ghosal:2001} apply in our case.

We want to apply Theorem 2.1 of \cite{Ghosal:2001}. This gives conditions for 
posterior contraction rates in terms of the Hellinger distance on densities and other, 
related distance measures. 
The Hellinger distance $h(p_\l, p_{\l'})$ is in our case given by 
$h^2(p_\l, p_{\l'})  = 2 (1- \EE_{\l'}\sqrt{p_\l(N)/p_{\l'}(N)})$,
where $\EE_\l$ is the expectation corresponding to the probability measure $\PP_\l$ under which 
the process $N$ is a Poisson process with intensity function $\l$. 
The other relevant distance measures are the Kullback-Leibler divergence $K(p_\l, p_{\l'}) =  - \EE_{\l'} \log(p_\l(N)/p_{\l'}(N))$ between $p_\l$ and $p_{\l'}$
and the related 
variance measure  $V(p_\l, p_{\l'}) =   \Var_{\l'} \log(p_\l(N)/p_{\l'}(N))$.
For a Poisson process $N$ with intensity $\lambda$ and a bounded, measurable function $f$, we have
\begin{align*}
\EE \int_0^T f(t)\,dN_t & = \int_0^T f(t)\l(t)\,dt,\\
\Var \int_0^T f(t)\,dN_t & = \int_0^T f^2(t)\l(t)\,dt,\\ 
\EE e^{\int_0^T f(t)\,dN_t} & = e^{-\int_0^T (1-\exp(f(t)))\l(t)\,dt}.
\end{align*}
Using these relations it is straightforward to verify that we have
\begin{align*}
h^2(p_\lambda, p_{\lambda'}) &= 2(1-e^{-\frac12 \int_0^T\big(\sqrt{\l(t)} - \sqrt{\l'(t)}\big)^2\,dt}),\\
K(p_\l, p_{\l'}) & =  
\int_0^T(\l(t) - \l'(t))\,dt + \int_0^T\lambda'(t)\log\frac{\l'(t)}{\l(t)} \,dt,\\
V(p_\l, p_{\l'}) & =   \int_0^T\lambda'(t)\log^2\frac{\l'(t)}{\l(t)} \,dt,
\end{align*}
respectively. 

The following lemma relates these statistical distances between densities to  
certain distances between intensity functions. We denote the minimum and maximum 
of two numbers $a$ and $b$ by $a \wedge b$ and $a \vee b$, respectively.

\begin{lemma}
\label{lemma:bounds}
We have the inequalities
\begin{align*}
\frac1{\sqrt 2} \Big(\|\sqrt{\l\phantom{'}} - \sqrt{\l'}\|_2 \wedge 1\Big) \le 
h(p_{\l\phantom{'}}, p_{\l'}) & \le \sqrt 2\Big(\|\sqrt{\l\phantom{'}} - \sqrt{\l'}\|_2 \wedge 1\Big),\\
K(p_{\l\phantom{'}}, p_{\l'}) & \le 3\|\sqrt{\l\phantom{'}} - \sqrt{\l'}\|_2^2 + V(p_{\l\phantom{'}}, p_{\l'}),\\
\|\sqrt{\l\phantom{'}} - \sqrt{\l'}\|^2_2 & \le \frac14\int_0^T (\l(t) \vee \l'(t))\log^2\frac{\l(t)}{\l'(t)}\,dt.
\end{align*}
\end{lemma}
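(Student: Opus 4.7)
The plan is to reduce each of the three inequalities to a pointwise analytic statement, using the explicit formulas for $h^2$, $K$ and $V$ stated just above the lemma. Writing $r = \lambda/\lambda'$ and $D = \|\sqrt\lambda - \sqrt{\lambda'}\|_2$, these read
\[
h^2 = 2\bigl(1 - e^{-D^2/2}\bigr), \quad K = \int_0^T \lambda'(r - 1 - \log r)\,dt, \quad V = \int_0^T \lambda' \log^2 r \, dt,
\]
so each target inequality will follow from a real-variable estimate on the integrand.

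For the Hellinger two-sided bound set $y = D^2/2$. The upper estimate is immediate from $1 - e^{-y} \le \min(y,1)$. For the lower estimate I split on whether $y \le 1/2$: in that case $1 - e^{-y} = \int_0^y e^{-s}\,ds \ge y e^{-1/2} \ge y/2$, giving $h^2 \ge y = D^2/2$; otherwise $1 - e^{-y} \ge 1 - e^{-1/2} \ge 1/4$, giving $h^2 \ge 1/2$. These combine to $h \ge (D \wedge 1)/\sqrt 2$.

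For the Kullback-Leibler bound, multiplying by $\lambda'$ and integrating reduces the claim to the pointwise inequality $r - 1 - \log r \le 3(\sqrt r - 1)^2 + \log^2 r$ for all $r > 0$. Substituting $r = e^{2a}$ turns this into $g(a) := 2e^{2a} - 6e^a + 4 + 2a + 4a^2 \ge 0$ on $\RR$. A direct check gives $g(0) = g'(0) = 0$, and $g''(a) = 8e^{2a} - 6e^a + 8$ which, viewed as the quadratic $8u^2 - 6u + 8$ in $u = e^a$, has discriminant $36 - 256 < 0$ and is therefore strictly positive. So $g$ is strictly convex with a minimum of $0$ at $a = 0$, and the pointwise inequality follows. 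This convexity step is the main non-mechanical ingredient of the proof.

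The third inequality reduces in the same way to the pointwise claim $(\sqrt\lambda - \sqrt{\lambda'})^2 \le \tfrac14(\lambda \vee \lambda')\log^2(\lambda/\lambda')$. Both sides are symmetric in $(\lambda, \lambda')$, so I may assume $\lambda' \ge \lambda$; setting $s = \sqrt{\lambda'/\lambda} \ge 1$ this reduces further to $s - 1 \le s \log s$, which holds on $[1, \infty)$ since both sides vanish at $s = 1$ and the derivative of the difference equals $\log s \ge 0$.
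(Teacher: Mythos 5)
Your proof is correct, and for the first and third inequalities it follows essentially the same route as the paper: the Hellinger bounds come from the elementary estimate $\tfrac14(x\wedge 1)\le 1-e^{-x/2}\le x\wedge 1$ applied to $x=\|\sqrt\l-\sqrt{\l'}\|_2^2$ (your explicit case split at $y=1/2$ is just a worked-out version of this), and the third inequality is in both cases the observation $1-x\le|\log x|$ on $(0,1)$ after symmetrizing, your form $s-1\le s\log s$ being equivalent after dividing by $s$. The genuine difference is in the Kullback--Leibler bound. The paper handles $f(r)=r-1-\log r$ by splitting the domain at $r=1/e$, using $f(r)\le 3(\sqrt r-1)^2$ on $(1/e,\infty)$ --- a bound it justifies only by a Taylor expansion near $1$ and the asymptotic ratio as $r\to\infty$ --- and $f(r)\le|\log r|\le\log^2 r$ on $(0,1/e)$. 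You instead prove the single global pointwise inequality $r-1-\log r\le 3(\sqrt r-1)^2+\log^2 r$ for all $r>0$ by substituting $r=e^{2a}$ and showing $g(a)=2e^{2a}-6e^a+4+2a+4a^2\ge 0$ via $g(0)=g'(0)=0$ and strict convexity (the quadratic $8u^2-6u+8$ in $u=e^a$ has negative discriminant). This buys a fully rigorous, self-contained verification of the step the paper leaves somewhat informal, at the cost of a less transparent reason for \emph{why} the constant $3$ suffices; the paper's two-region argument makes the roles of the two terms on the right-hand side more visible but requires checking a non-obvious uniform bound on an interval.
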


\begin{proof}
The inequalities for $h$ follow from the fact that $(1/4) (x\wedge 1) \le  1-\exp(-x/2) \le x \wedge 1$ for 
$x \ge 0$. 

For the Kullback-Leibler divergence we have
\[
K(p_\l, p_{\l'}) = \int_0^T \l'(t)f(\l(t)/\l'(t))\,dt, 
\]
for $f(x) = x-1-\log x$. By Taylor's formula, $|f(x)|$ is bounded by a constant times $(\sqrt{x}-1)^2$ 
in a neighborhood of $1$. Since $|f(x)|$ is bounded by $|x|$ for $x \ge 1$ and $|x|/(\sqrt{x}-1)^2 \to 1$
as $x \to \infty$, we have in fact $|f(x)| \le 3 (\sqrt{x}-1)^2$ for all $x \in (1/e, \infty)$, say. 
For $(0, 1/e)$ we have 
$|f(x)| \le |\log x|$. It follows that 
\[
K(p_\l, p_{\l'}) \le 3\int_{\l/\l' \ge 1/e}(\sqrt{\l(t)} - \sqrt{\l'(t)})^2\,dt + \int_{\l/\l' \le 1/e} \l'(t)\Big|\log \frac{\l(t)}{\l'(t)}\Big|\,dt.
\]
The first term on the right is bounded by $3\|\sqrt\l - \sqrt{\l'}\|_2^2$. For the second term we note that 
 for $\lambda/\l' \le 1/e$,  we have $|\log (\l/\l')| \ge 1$ and hence  $|\log (\l/\l')| \le \log^2 |\l/\l'|$. 
The statement of the lemma follows.

To prove the last inequality, write $\|\sqrt{\l\phantom{'}} - \sqrt{\l'}\|^2_2$ as the sum of an integral over 
the set $\{\l' \le \l\}$ and an integral over the set $\{\l' > \l\}$ and use the fact that $1-x \le |\log x|$
for $x \in (0,1)$. 
\end{proof}

To connect assumptions \eqref{eq:pm}--\eqref{eq:en} to the corresponding assumptions 
of Theorem 2.1 of \cite{Ghosal:2001} we first note that 
since $\l_0$ is bounded away from $0$ and infinity by assumption, the same holds for any $\l \in \L$ 
that is uniformly close enough to $\l_0$. 
The lemma and the definition of $V$ therefore imply that for $\l$ uniformly close enough to $\l_0$, 
both $K(p_\l, p_{\l_0})$ and $V(p_{\l}, p_{\l_0})$ are bounded by a constant times the uniform norm 
$\|\l-\l_0\|_\infty$.
It follows  that for $n$ large enough, the Kullback-Leibler-type ball
\[
B(\eps_n) = \{\lambda \in \Lambda: K(p_\lambda, p_{\lambda_0}) \le \tilde\eps^2_n, 
V(p_\l, p_{\l_0}) \le \tilde\eps^2_n\}
\]
is larger than a multiple of the uniform ball $\{\lambda \in \Lambda : \|\l-\l_0\|_\infty \le \tilde\eps_n\}$. 
The lemma also implies that the covering number $N(\bar\eps_n, \{p_\l: \l \in \Lambda_n\}, h)$ 
is bounded by $N(\bar\eps_n/\sqrt{2}, \sqrt\Lambda_n, \|\cdot\|_2)$. 
Hence, assumptions \eqref{eq:pm}--\eqref{eq:en} imply that the conditions of 
Theorem 2.1 of \cite{Ghosal:2001} are fulfilled. This theorem states that for $M$ large enough, 
$\EE_{\l_0} \Pi_n(\l: h(p_\l, {p_{\l_0}}) \ge M \eps_n) \to 0$. 
To complete the proof, note that by the fact that $M\eps_n \le 1$ for $n$ large enough and the first inequality of 
the lemma, it holds, 
for $n$ large enough, that $\|\sqrt\l-\sqrt\l_0\|_2 \ge \sqrt{2}M\eps_n$ implies that $h(p_\l, {p_{\l_0}}) \ge M \eps_n$.

\subsection{Proof of Theorem \ref{theorem:dis}}
\label{sec:a2}

The proof is similar as the proof of Theorem \ref{theorem:con}, but this time 
we start from the observation that in the discrete-observations case, 
the data constitute a sample of  $n$ independent, identically distributed random vectors 
$C^{(1)}, \ldots, C^{(n)}$ in $\RR^m$, where
\[
C^{(i)} = (C_{i1}, \ldots, C_{im})
\]
and $C_{ij}$ is given by \eqref{eq:c}.
The coordinates $C_{ij}$ of $C^{(i)}$ are independent Poisson variables with mean $\l_j$
given by \eqref{eq:l}.

Again we  apply Theorem 2.1 of \cite{Ghosal:2001}. 
In this case the 
Hellinger distance $h_m$, Kullback Leibler divergence $K_m$ and variance measure 
$V_m$ are easily seen to be given by
\begin{align*}
h_m^2(\lambda, {\lambda'}) &= 2(1-e^{-\frac12 \sum\big(\sqrt{\l_j\phantom{'}} - \sqrt{\l_j'}\big)^2}),\\
K_m(\l, {\l'}) & =  
\sum(\l_j - \l'_j) + \sum\lambda'_j\log\frac{\l'_j}{\l_j},\\
V_m(\l, {\l'}) & =   \sum\lambda'_j\log^2\frac{\l'_j}{\l_j},
\end{align*}
respectively. These quantities satisfy the same bounds as in Lemma \ref{lemma:bounds}, but 
with the integrals replaced by the corresponding sums. 
Moreover, by expanding the square and using Cauchy-Schwarz we see that
\[
\sum \Big(\sqrt{\l_j\phantom{'}} - \sqrt{\l_j'}\Big)^2 \le \|\sqrt{\l\phantom{'}} - \sqrt{\l'}\|^2_2,
\]
and hence also 
\[
V_m(\l, \l') \le 4 \sum \frac{\l_j'}{\l_j \wedge \l_j'}
\Big(\sqrt{\l_j\phantom{'}} - \sqrt{\l_j'}\Big)^2 \le 
4 \frac{\|\l'\|_\infty}{\inf_t |\l(t)| \wedge \inf_t |\l'(t)|}\|\sqrt{\l\phantom{'}} - \sqrt{\l'}\|^2_2.
\]
Using these relations the proof can be completed exactly as in Section \ref{sec:a1}.

\subsection{Proof of Theorem \ref{theorem:spline}}

Under the prior $\Pi$, the number of knots $J$ has, by construction, 
a shifted Poisson distribution. By Stirling's approximation, this implies that for large $j$, 
\[
\PP(J > j) \lle e^{-c_1j\log j}, \qquad
 \PP(J = j) \gtrsim e^{-c_2j\log j}
\]
for some $c_1, c_2 > 0$. 
For the sequence of inner knots $\bm{k}$ constructed in the definition of the prior we have
that the mesh width $M(\bm{k}) = \max\{|k_j-k_{j-1}|\}$ and the sparsity  
$m(\bm{k}) = \min\{|k_j-k_{j-1}|\}$ satisfy 
\begin{align*}
\PP(m(\bm{k}) < j^{-2} \given J = j) = 0, \qquad
\PP(M(\bm{k}) \le 2/j \given J = j) \gtrsim e^{-j\log j}.
\end{align*}
The first of these facts follows trivially from the construction, the second one 
by bounding the probability of interest from below by the probability that 
every of the consecutive intervals of length $T/j$ contains at least one knot. 
For the B-spline coefficients we have, by independence, 
\[
\PP(\|\bm{\theta} - \bm{\theta_0}\|_\infty \le \eps \given J=j) \gtrsim \eps^j
\] 
for all $\bm{\theta_0} \in [M_1, M_2]^{j}$. 
Theorem 1 of \cite{Belitser:2012b} deals exactly with this situation. In the present setting the theorem asserts
that if $\l_0 \in C^\alpha[0,T]$ and $M_1 \le \l_0 \le M_2$, then for $J_n, \bar J_n > q$ and positive $\eps_n \ge \bar\eps_n$ 
such that $\eps_n \to 0$, $n\bar\eps_n^2 \to \infty$ and
\begin{equation}\label{eq:cc}
2 \Big(\frac{\bar\eps_n}{\|\l_0\|_{C^\alpha}}\Big)^{-1/\alpha} \le \bar J_n, \quad
\log \bar J_n \lle \log \frac 1{\bar\eps_n}, \quad
J_n \log\frac{J_n^3}{\eps_n} \lle n\eps^2_n, \quad
n\bar\eps_n^2 \le J_n \log J_n,
\end{equation}
then there exist function spaces (of splines) $\L_n$ and a constant $c > 0$ such that 
\begin{align}
\label{eq:pm2}\Pi(\lambda: \|\l-\l_0\|_\infty \le 2\bar\eps_n) & \gtrsim e^{-c\bar J_n\log\frac1{\bar\eps_n}},\\
\label{eq:rm2}\Pi(\lambda \not \in \L_n) & \lle e^{-c_1n\bar\eps^2_n},\\
\label{eq:en2}\log N(\eps_n, \L_n, \|\cdot\|_2) & \lle n\eps^2_n.
\end{align}

Now observe that  the first two inequalities in \eqref{eq:cc} hold for 
\[
\bar\eps_n = n^{-\frac{\alpha}{1+2\alpha}}\log^p n, \quad
\bar J_n = Kn^{\frac{1}{1+2\alpha}}\log^{q} n,
\]
provided $K$ is large enough and $q \ge -p/\alpha$. The third and fourth inequalities then hold for 
\[
 J_n = Ln^{\frac{1}{1+2\alpha}}\log^{r} n, \quad
\eps_n = n^{-\frac{\alpha}{1+2\alpha}}\log^s n
\] 
if $L$ is large enough and $2p \le r+1 \le 2s$. 
To complete the proof we have to link \eqref{eq:pm2}--\eqref{eq:en2} to 
the conditions \eqref{eq:pm}--\eqref{eq:en} of Theorems \ref{theorem:con}
and \ref{theorem:dis}. 
Note that since \eqref{eq:rm} should hold for all $c_3 > 0$, we need to have 
\[
\bar J_n \log\frac1{\bar\eps_n} \ll n\bar\eps^2_n.
\]
For our choices of $\bar J_n$ and $\bar\eps_n$ this holds if $2p > q+1$. 
This amounts to choosing $p> \alpha/(1+2\alpha)$.
Then if we define 
\[
\tilde\eps_n = \sqrt{\frac{\bar J_n}{n} \log\frac1{\bar\eps_n}}
\]
the right-hand side of \eqref{eq:pm2} equals $\exp(-2n\tilde\eps^2_n)$. 
Moreover, it holds that $\tilde\eps_n \sim n^{-\alpha/(1+2\alpha)}(\log n)^{(q+1)/2}$, 
so if we  make sure that $p > (q+1)/2$, the desired inequality \eqref{eq:pm} holds. 
The considerations above imply
that \eqref{eq:rm} then holds as well, for any $c_3 \ge 1$. 
Recall that we found that the entropy condition holds for $\eps_n \sim n^{-\alpha/(1+2\alpha)}(\log n)^{s}$, 
provided $s > p$. This means that we should choose $p,q, r$ and $s$ above such that 
\[
p > \frac{\alpha}{1+2\alpha}, \quad
r = 2p-1, \quad
s > p, \quad
q = -\frac1{1+2\alpha}.
\]
Since the intensities in 
$\Lambda_n$ are uniformly bounded by a common constant (see the proof of Theorem 1 
of \citealt{Belitser:2012b}), \eqref{eq:en2} implies that \eqref{eq:en} is fulfilled.

\section*{Acknowledgement}

Research supported by the Netherlands Organization for Scientific Research NWO.

\bibliographystyle{biometrika}
\bibliography{ref}

\end{document}